\newtheorem{theorem}{Theorem}
\newtheorem{thm}[theorem]{Theorem}
\newtheorem{lem}[theorem]{Lemma}
\def\\{\cr}
\def\({\left(}
\def\){\right)}
\def\[{\left[}
\def\]{\right]}
\def\<{\langle}
\def\>{\rangle}
\def\cN{\mathcal N}
\def\notdivides{\mathrel{\kern-3pt\not\!\kern3.5pt\bigm|}}
\begin{document}


\title{Pairwise non-coprimality of triples}
\author{
{\sc Randell Heyman}\\
{School of Mathematics and Statistics, University of New South Wales} \\
{Sydney, NSW 2109, Australia}\\
{\tt randell@unsw.edu.au}}

\date{ }
\maketitle

\begin{abstract}
We say that $(a_1,\ldots,a_k)$ is \emph{pairwise non-coprime} if $\gcd(a_i,a_j) \ne 1$ for all $1 \le i <j \le k$. Let $a_1,a_2,a_3$ be positive integers less than $H$. We obtain an asymptotic formula for the number of $(a_1,a_2,a_3)$ that are pairwise non-coprime.
The probability that a randomly chosen unbounded positive integer triple is pairwise non-coprime is approximately 17.4\%. We also give an upper bound on the error term in an asymptotic formula for $\sum_{n=1}^H (\varphi(n)/n)^m$ for $m \ge 2$ and as $H \rightarrow \infty$.

\end{abstract}


\noindent
\section{Introduction}
The result regarding the probability that two positive integers are coprime is well-known (see, for example \cite[Theorem 332]{Har}).
Nymann \cite{Nym} gave the following result:
\begin{align}\label{Nyma}
\sum_{\substack{1\le a_1,\ldots,a_k\le H\\\gcd(a_1,\ldots, a_k)=1}}1&=\frac{H^k}{\zeta(k)}+\begin{cases} O(H \log H) & \text{if } k=2, \\
O\(H^{k-1}\) & \text{if } k\ge 3,
\end{cases}
\end{align}
where $\zeta(k)$ is the usual Riemann zeta function.
This naturally leads to enumeration of $k$-tuples with pairwise coprimality. T$\acute{\textrm{o}}$th \cite{Tot} showed that
\begin{align}\label{Toth}
\sum_{\substack{1\le a_1,\ldots,a_k\le H\\\gcd(a_i,a_j)=1\\i \ne j}}1&=\vartheta(k) H^k + O(H^{k-1}\(\log H \)^{k-1}),
\end{align}
where
$$\vartheta(k)=\prod_{p~\mathrm{prime}}\(1-\frac{1}{p}\)^{k-1}\(1+\frac{k-1}{p}\).$$
In this paper we enumerate the number of triples of maximum height $H$ that are pairwise non-coprime. Let
$$\cN_n(H)=\sum_{\substack{1\le a_1,\ldots,a_n\le H\\\gcd(a_i,a_j)\ne 1\\1\le i<j \le n}}1.$$
Our main result is the following.
\begin{thm}\label{main}
Suppose $H$ is a positive integer.
Then
$$\cN_3(H)=\rho H^3+O\(H^2\(\log H\)^2\),$$
where
$$\rho=1-\frac{3}{\zeta(2)}+3\prod_{p~\textrm{prime}}\(1 - \frac{2p-1}{p^3}\)- \prod_{p~\mathrm{prime}}\(1-\frac{3p-2}{p^3}\).$$
\end{thm}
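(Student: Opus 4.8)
The plan is to run inclusion--exclusion over the three coprimality events. Writing $P_{ij}\subseteq\{1,\dots,H\}^3$ for the set of triples with $\gcd(a_i,a_j)=1$, the obvious symmetry gives $|P_{12}|=|P_{13}|=|P_{23}|$ and $|P_{12}\cap P_{13}|=|P_{12}\cap P_{23}|=|P_{13}\cap P_{23}|$, so
$$\cN_3(H)=H^3-3|P_{12}|+3|P_{12}\cap P_{13}|-|P_{12}\cap P_{13}\cap P_{23}|.$$
Two of these three counts are essentially known. Summing trivially over $a_3$ and applying Nymann's formula \eqref{Nyma} with $k=2$ gives $|P_{12}|=H^3/\zeta(2)+O(H^2\log H)$, which accounts for the term $-3/\zeta(2)$ in $\rho$ via $1/\zeta(2)=\prod_p(1-1/p^2)$. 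And $|P_{12}\cap P_{13}\cap P_{23}|$ is exactly the number of pairwise coprime triples, so Tóth's formula \eqref{Toth} with $k=3$ gives $|P_{12}\cap P_{13}\cap P_{23}|=\vartheta(3)H^3+O(H^2(\log H)^2)$, and a one-line expansion of the Euler factor shows $\vartheta(3)=\prod_p(1-(3p-2)/p^3)$, which is the last term of $\rho$. So everything hinges on the middle count.

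For $|P_{12}\cap P_{13}|$ I would first observe that, with $a_1$ fixed, the conditions $\gcd(a_1,a_2)=1$ and $\gcd(a_1,a_3)=1$ are independent, whence, using $\sum_{a\le H,\ \gcd(a,n)=1}1=\sum_{d\mid n}\mu(d)\lfloor H/d\rfloor=(\varphi(n)/n)H+O(2^{\omega(n)})$,
$$|P_{12}\cap P_{13}|=\sum_{a_1\le H}\Bigl(\,\sum_{\substack{a\le H\\\gcd(a,a_1)=1}}1\,\Bigr)^2=\sum_{a_1\le H}\Bigl(\frac{\varphi(a_1)}{a_1}H+O\!\left(2^{\omega(a_1)}\right)\Bigr)^2.$$
Expanding the square, the cross term is $O\!\left(H\sum_{a\le H}2^{\omega(a)}\right)=O(H^2\log H)$ and the error-squared term is $O\!\left(\sum_{a\le H}4^{\omega(a)}\right)=O(H(\log H)^3)$, so the remaining task is an asymptotic for $\sum_{a\le H}(\varphi(a)/a)^2$.

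This is where the auxiliary estimate advertised in the abstract enters. I would prove that, for $m\ge 2$,
$$\sum_{n\le H}\Bigl(\frac{\varphi(n)}{n}\Bigr)^m=C_m H+O\!\left((\log H)^m\right),\qquad C_m=\prod_{p}\Bigl(1+\frac{(1-1/p)^m-1}{p}\Bigr),$$
by Dirichlet convolution: $f(n)=(\varphi(n)/n)^m$ is multiplicative with $f(p^k)=(1-1/p)^m$ for every $k\ge 1$, so $g:=f*\mu$ is supported on squarefree $n$ with $g(p)=(1-1/p)^m-1$, giving $|g(d)|\le m^{\omega(d)}/d$ on squarefrees. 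Then $\sum_{n\le H}f(n)=\sum_{d\le H}g(d)\lfloor H/d\rfloor$ splits into the main term $H\sum_{d\ge1}g(d)/d=C_mH$, a tail $H\sum_{d>H}g(d)/d=O((\log H)^{m-1})$, and a fractional-part error $O\!\left(\sum_{d\le H}|g(d)|\right)=O((\log H)^m)$, the last two controlled by Mertens-type bounds on $\sum_{d\le H}m^{\omega(d)}/d$ and its tail. For $m=2$ one gets $C_2=\prod_p(1-(2p-1)/p^3)$ and $\sum_{a\le H}(\varphi(a)/a)^2=C_2H+O((\log H)^2)$, hence $|P_{12}\cap P_{13}|=C_2H^3+O(H^2(\log H)^2)$. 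Substituting the three pieces back into the inclusion--exclusion identity yields $\cN_3(H)=(1-3/\zeta(2)+3C_2-\vartheta(3))H^3+O(H^2(\log H)^2)=\rho H^3+O(H^2(\log H)^2)$.

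The main obstacle is not conceptual but a matter of keeping the error terms in hand --- in particular confirming that the fractional-part error in the $\sum(\varphi(n)/n)^2$ estimate really is $O((\log H)^2)$ (this, together with Tóth's error, is precisely what produces the $(\log H)^2$ in the statement), which comes down to Mertens' bound $\sum_{p\le H}1/p=\log\log H+O(1)$ and the resulting $\prod_{p\le H}(1+2/p)\ll(\log H)^2$. Beyond that, the argument needs nothing deeper than the quoted results of Nymann and Tóth.
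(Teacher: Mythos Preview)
Your proof is correct and follows essentially the same route as the paper: the same inclusion--exclusion decomposition into the three counts, the same appeals to Nymann and T\'oth for the first and last, and the same treatment of the middle count via $\sum_{a\le H,\ (a,n)=1}1=(\varphi(n)/n)H+O(2^{\omega(n)})$ squared and the auxiliary estimate for $\sum_{n\le H}(\varphi(n)/n)^m$ proved by the convolution $g=f*\mu$ (your $g$ is the paper's $f$). The only cosmetic difference is that the paper bounds the tail $H\sum_{d>H}g(d)/d$ by $H^{o(1)}$ rather than your $O((\log H)^{m-1})$, but either suffices.
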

According to Moree~\cite[Page 9]{Mor}, Freiberg~\cite{Fre} gives a result for the probability that three positive integers are pairwise non-coprime.
\section{Notation}
As usual, for any integer $n\ge 1,$ let $\omega(n)$, $\varphi(n)$ and $\tau(n)$ be the number of distinct prime factors, the
Euler totient function and the number of divisors of $n$ respectively (we also set $\omega(1) =0$).


We recall that the notation $f(x) = O(g(x))$  is
equivalent to the assertion that there exists a constant $c>0$ such that $|f(x)|\le c|g(x)|$ for all $x$. The notation
$f(x)=o(g(x))$ is equivalent to the assertion that $$\lim_{x \rightarrow \infty}\frac{f(x)}{g(x)}=0.$$

Finally, we use $(a,b)$ to represent $\gcd(a,b)$.

\section{Preparatory Lemma}
Kac \cite{Kac} attributes to I. Schur the following result. For $m \ge 2$,
$$\lim_{H \rightarrow  \infty}\frac{1}{H} \sum_{n=1}^H\(\frac{\varphi(n)}{n}\)^m = \prod_{p~\textrm{prime}}\(1+\frac{\(1-1/p\)^m-1}{p}\).$$
For Theorem \ref{main} we require an upper bound on the error term in an asymptotic formula for $$\sum_{n=1}^H \(\frac{\varphi(n)}{n}\)^2.$$
I thank an anonymous referee for pointing out that an upper bound on the error term in the general case is known (see \cite{Cho}) and that it has since been improved (see for e.g. ~\cite{Bal},~\cite{Liu}) using analytic tools. We provide a different elementary proof to that of ~\cite{Cho}.
\begin{lem}\label{variousm}
Let $m\ge 2$. We have
\begin{align*}
\sum_{n=1}^H \(\frac{\varphi(n)}{n}\)^m=H\prod_{p~\textrm{prime}}\(1+\frac{\(1-1/p\)^m-1}{p}\)+O\((\log H)^m\).
\end{align*}
\end{lem}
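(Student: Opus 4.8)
The idea is to write $(\varphi(n)/n)^m$ as a Dirichlet convolution of the constant function $1$ with a multiplicative function $g_m$ that is small, so that the summatory function is controlled by the tail of $\sum g_m(d)/d$. Since $\varphi(n)/n = \prod_{p \mid n}(1-1/p)$, the function $f_m(n) = (\varphi(n)/n)^m$ is multiplicative with $f_m(p^k) = (1-1/p)^m$ for all $k \ge 1$. Define $g_m = f_m * \mu$ (Möbius inversion), so $g_m$ is multiplicative and $g_m(p^k) = f_m(p^k) - f_m(p^{k-1})$; thus $g_m(p) = (1-1/p)^m - 1$ and $g_m(p^k) = 0$ for $k \ge 2$. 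Hence $g_m$ is supported on squarefree integers, and $|g_m(p)| = 1 - (1-1/p)^m \le m/p$ by the mean value theorem (or the binomial expansion). Consequently $|g_m(d)| \le m^{\omega(d)}/d$ for squarefree $d$, and more crucially $\sum_{d=1}^\infty |g_m(d)| < \infty$ (it converges since $\sum_p m/p^2 < \infty$), with the Euler product $\sum_{d=1}^\infty g_m(d)/d = \prod_p \bigl(1 + g_m(p)/p\bigr) = \prod_p \bigl(1 + ((1-1/p)^m - 1)/p\bigr)$, which is exactly the constant in the statement.

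Next I would interchange summation in the standard way:
\begin{align*}
\sum_{n=1}^H f_m(n) = \sum_{n=1}^H \sum_{d \mid n} g_m(d) = \sum_{d=1}^H g_m(d) \fl{H/d} = H\sum_{d=1}^H \frac{g_m(d)}{d} + O\!\left(\sum_{d=1}^H |g_m(d)|\right).
\end{align*}
For the main term, extend the sum to infinity at a cost of $H \sum_{d>H} |g_m(d)|/d$; combined with the $O$-term this leaves me to bound $\sum_{d > H} |g_m(d)|$ and $\sum_{d \le H} |g_m(d)|$. The genuine work is estimating $\sum_{d \le H} |g_m(d)|$ and the tail $\sum_{d>H}|g_m(d)|$. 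Using $|g_m(d)| \le m^{\omega(d)}/d$ for squarefree $d$ and $|g_m(d)|=0$ otherwise, I get $\sum_{d \le H}|g_m(d)| \le \sum_{d \le H} m^{\omega(d)}/d$; since $\sum_{d \le H} m^{\omega(d)}/d \le \prod_{p \le H}\bigl(1 + m/p\bigr) \ll (\log H)^m$ by Mertens' theorem, this gives the $O((\log H)^m)$ error. For the tail $H\sum_{d > H}|g_m(d)|/d \le H \sum_{d>H} m^{\omega(d)}/d^2$; a partial summation against the bound $\sum_{d \le t} m^{\omega(d)} \ll t(\log t)^{m-1}$ shows $\sum_{d>H} m^{\omega(d)}/d^2 \ll (\log H)^{m-1}/H$, so this tail contributes $O((\log H)^{m-1})$, which is absorbed.

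The main obstacle is the estimate $\sum_{d \le H} m^{\omega(d)}/d \ll (\log H)^m$ and the matching tail bound, i.e. getting the power of $\log H$ exactly right rather than $m^{\Omega(d)}$-type overcounting; the clean route is the Euler-product/Rankin bound $\sum_{d\le H} m^{\omega(d)}/d \le \prod_{p \le H}(1 + m/p + m/p^2 + \cdots)$ together with $\sum_{p \le H} 1/p = \log\log H + O(1)$, and for the tail, Shiu's theorem or the elementary estimate $\sum_{d \le t} m^{\omega(d)} \ll_m t (\log t)^{m-1}$ followed by Abel summation. Everything else — multiplicativity of $g_m$, the convolution identity, the Euler product evaluation of the constant, and the interchange of summation — is routine. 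I should take care to track the dependence on $m$ only in the implied constant, which is allowed since $m$ is fixed.
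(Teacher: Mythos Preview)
Your proposal is correct and follows essentially the same approach as the paper: both introduce the same squarefree-supported multiplicative function (the paper's $f$ is your $g_m$), write $(\varphi(n)/n)^m=\sum_{d\mid n}g_m(d)$, interchange summation, and control the error via $|g_m(d)|\le m^{\omega(d)}/d$. The only cosmetic differences are that the paper bounds $\sum_{d\le H}|g_m(d)|$ by citing $\sum_{l\le H}m^{\omega(l)}\ll H(\log H)^{m-1}$ from Tenenbaum plus partial summation (rather than your direct Mertens/Euler-product bound), and handles the tail $\sum_{d>H}|g_m(d)|/d$ with the cruder $m^{\omega(d)}=d^{o(1)}$ instead of your sharper partial-summation estimate.
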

\begin{proof}
Let $m \ge 2$. Then
\begin{align}
\sum_{n=1}^H \(\frac{\varphi(n)}{n}\)^m&=\sum_{n=1}^H\prod_{p\mid n}\(1-1/p\)^m\notag\\
&=\sum_{n=1}^H\prod_{p\mid n}\(1+f(p)\),\label{beforef(n)}
\end{align}
where
$$f(n) = \begin{cases} \prod_{p\mid n}\(\(1-1/p\)^m-1\) & \text{if } n \text{ is square free,} \\
0 & \text{otherwise. }  \end{cases}$$
We will freely use the fact that
$$|f(n)|\le \prod_{p\mid n}\frac{m}{p}=\frac{m^{\omega(n)}}{n}.$$
Returning to \eqref{beforef(n)} we have
\begin{align}
\sum_{n=1}^H \(\frac{\varphi(n)}{n}\)^m&=\sum_{n=1}^H\sum_{d \mid n} f(d)\notag\\
&=\sum_{d \le H} f(d)\(\frac{H}{d}+O\(1\)\)\notag\\
&=H\sum_{d \le H} \frac{f(d)}{d} + O\(\sum_{d \le H} |f(d)|\).\label{sumvarphisquared}
\end{align}
For the error term in \eqref{sumvarphisquared} we note from \cite[III.3 Theorem 6]{Ten} that
\begin{align}\label{tenenbaum}
\sum_{l=1}^H m^{\omega(l)}&=O\(H(\log H)^{m-1}\).
\end{align}
Using \eqref{tenenbaum} and partial summation the error term in \eqref{sumvarphisquared} can given by
\begin{align}\label{phierror}
\sum_{d \le H} |f(d)|&=O\((\log H)^m\).
\end{align}

For the main term in \eqref{sumvarphisquared} we observe that
$$\sum_{d \le \infty}\frac{f(d)}{d}$$ is absolutely convergent since
\begin{align*}
\sum_{d >H}\Big|\frac{f(d)}{d}\Big|\le \sum_{d >H} \frac{m^{\omega(d)}}{d^2}\le \sum_{d >H} \frac{d^{o(1)}}{d^2}\le \sum_{d >H} \frac{1}{d^{2+o(1)}}=H^{-1+o(1)}.
\end{align*}
Thus
\begin{align}\label{fdond}
\sum_{d \le H} \frac{f(d)}{d}&=\sum_{d < \infty} \frac{f(d)}{d}-\sum_{d > H} \frac{f(d)}{d}\notag\\
&=\prod_{p~\textrm{prime}}\(1+\frac{f(p)}{p}\)+O\(H^{-1+o(1)}\)\notag\\
&=\prod_{p~\textrm{prime}}\(1+\frac{\(1-1/p\)^m-1}{p}\)+O\(H^{-1+o(1)}\).
\end{align}
Combining \eqref{sumvarphisquared},~\eqref{phierror} and \eqref{fdond}  completes the proof.
\end{proof}

\section{Proof of Theorem \ref{main}}
It is clear that
\begin{align*}
\cN_3(H)&=H^3-\sum_{\substack{1 \le a_1,a_2,a_3 \le  H\\(a_i,a_j)= 1\\\mathrm{for~some}~1\le i<j \le 3}}1.
\end{align*}
Then using the inclusion-exclusion principle we have
\begin{align*}
\sum_{\substack{1 \le a_1,a_2,a_3 \le  H\\(a_i,a_j)= 1\\\mathrm{for~some}~1 \le i<j \le 3}}1&=\sum_{\substack{1 \le a_1,a_2,a_3 \le  H\\1\le i<j\le 3 }}\sum_{(a_i,a_j)=1}1-\sum_{\substack{1 \le a_1,a_2,a_3 \le  H\\1 \le i<j<k \le 3 }}\sum_{\substack{(a_i,a_j)=1\\(a_j,a_k)=1}}1\\
&\quad\quad+\sum_{\substack{1 \le a_1,a_2,a_3 \le  H\\(a_1,a_2)=1\\ (a_1,a_3)=1\\(a_2,a_3)=1}}1.
\end{align*}
Using symmetry, we obtain
\begin{align}\label{firstpart}
\cN_3(H)&=H^3-3\sum_{\substack{1 \le a_1,a_2,a_3 \le  H\\(a_1,a_2)=1}}1+3\sum_{\substack{1 \le a_1,a_2,a_3 \le  H\\(a_1,a_2)=1\\(a_1,a_3)=1}}1-\sum_{\substack{1 \le a_1,a_2,a_3 \le  H\\(a_1,a_2)=1\\(a_1,a_3)=1\\(a_2,a_3)=1 }}1.
\end{align}
Using \eqref{Nyma}, the first summation of \eqref{firstpart} is given by
\begin{align}\label{firstsum}
\sum_{\substack{1 \le a_1,a_2,a_3 \le  H\\(a_1,a_2)=1}}1&=\frac{H^3}{\zeta(2)}+O\(H^2\log H\).
\end{align}
Using \eqref{Toth}, the third summation of \eqref{firstpart} is given by
\begin{align}\label{thirdsum}
\sum_{\substack{1 \le a_1,a_2,a_3 \le  H\\(a_1,a_2)=1\\(a_1,a_3)=1\\(a_2,a_3)=1 }}1&=\vartheta(3)H^3+O\(H^2(\log H)^2\).
\end{align}

It remains to express the middle summation of \eqref{firstpart}
as a multiple of $H^3$ and a suitable error term.
If we let
\begin{align}\label{phinh}
\varphi(n,H)&=\sum_{\substack{1\le a \le H\\(a,n)=1}}1,
\end{align}
then we have
\begin{align}\label{sumtophi}
\sum_{\substack{1 \le a_1,a_2,a_3 \le  H\\(a_1,a_2)=1\\ (a_1,a_3)=1}}1&=\sum_{\substack{1 \le n \le H\\}} \sum_{\substack{1\le a_2,a_3\le H\\(n,a_2)=1\\(n,a_3)=1}}1=\sum_{1 \le n \le H}\sum_{\substack{1\le a_3\le H\\(n,a_3)=1}}1\sum_{\substack{1\le a_2\le H\\(n,a_2)=1}}1\notag\\&=\sum_{1 \le n \le H}\varphi(n,H)^2.
\end{align}
The following is well-known (or see \cite[Lemma~4]{Hey}),
\begin{align*}
\varphi(n,H)&=\frac{H \varphi(n)}{n}+O\(2^{\omega(n)}\),
\end{align*}
and substituting into \eqref{sumtophi} we have,
\begin{align}\label{stage1}
\sum_{\substack{1 \le a_1,a_2,a_3 \le  H\\(a_1,a_2)=1\\(a_1,a_3)=1}}1&=\sum_{1 \le n \le H}\(\frac{H \varphi(n)}{n}+O\(2^{\omega(n)}\)\)^2\notag\\
\begin{split}
&=H^2\sum_{1 \le n \le H}\(\frac{ \varphi(n)}{n}\)^2+O\(H\sum_{1 \le n \le H}\frac{\varphi(n)2^{\omega(n)}}{n}\)\\
&\quad\quad\quad\quad+O\(\sum_{1 \le n \le H}\(2^{\omega(n)}\)^2\).
\end{split}
\end{align}
Appealing to \eqref{tenenbaum} we have
\begin{align}\label{stage2}
O\(\sum_{1 \le n \le H}\frac{\varphi(n)2^{\omega(n)}}{n}\)&=O\(\sum_{1 \le n \le H}2^{\omega(n)}\)=O\(H\log H\),
\end{align}
and also
\begin{align}\label{stage3}
\sum_{1 \le n \le H}\(2^{\omega(n)}\)^2&=O\(H\(\log H\)^3\).
\end{align}
Substituting equations \eqref{stage2} and \eqref{stage3} into \eqref{stage1} yields
\begin{align}\label{twogcd}
\sum_{\substack{1 \le a_1,a_2,a_3 \le  H\\(a_1,a_2)=1\\(a_1,a_3)=1}}1&=H^2\sum_{1 \le n \le H}\(\frac{ \varphi(n)}{n}\)^2+O\(H\(\log H\)^3\).
\end{align}

Using Lemma \ref{variousm}, setting $m=2$, and substituting into \eqref{twogcd} yields
\begin{align}\label{twogcdfinal}
\sum_{\substack{1 \le a_1,a_2,a_3 \le  H\\(a_1,a_2)=1\\(a_1,a_3)=1}}1&=H^3\prod_{p~\textrm{prime}}\(1 - \frac{2p-1}{p^3}\)+O\(H^2\(\log H\)^2\).
\end{align}
Substituting \eqref{firstsum}, \eqref{thirdsum} and \eqref{twogcdfinal} into \eqref{firstpart} completes the proof.
\section{Comments}
By using Theorem~\ref{main} we see that the probability that three randomly chosen positive integers will be pairwise non-coprime is given by $\rho \approx 0.1742$.

In this paper we have only considered $\cN_3(H)$. Our approach does not seem particularly well suited to higher tuples (that is $\cN_n(H)$ for $n>3$).
If we examine \eqref{firstpart} we observe that finding a suitable expression for $\cN_3(H)$ involved 3 different summations. The expression of two of these summations as a multiple of $H^3$ with a suitably bound error term was provided by previously known results. For $\cN_4(H)$ we have 10 summations (each summation corresponds to one of the, up to isomorphism, 10  non-null undirected graphs of 4 vertices). Of these 10 summations 6 can be obtained by natural extensions of the techniques in this paper. The remaining 4, namely
\begin{align*}
\sum_{\substack{1\le a_1,\ldots,a_4\le H\\(a_1,a_2)=1\\(a_2,a_3)=1\\(a_3,a_4)=1}}1,
\sum_{\substack{1\le a_1,\ldots,a_4\le H\\(a_1,a_2)=1\\(a_2,a_3)=1\\(a_3,a_4)=1\\(a_4,a_1)=1}}1,
\sum_{\substack{1\le a_1,\ldots,a_4\le H\\(a_1,a_2)=1\\(a_2,a_3)=1\\(a_2,a_4)=1\\(a_3,a_4)=1}}1\quad \text{and}~
\sum_{\substack{1\le a_1,\ldots,a_4\le H\\(a_1,a_2)=1\\(a_1,a_3)=1\\(a_1,a_4)=1\\(a_2,a_3)=1\\(a_2,a_4)=1}}1,
\end{align*} would appear to require different techniques.
\section{Acknowledgements}
The author would like to thank Igor Shparlinski for numerous helpful suggestions, particularly regarding two of the summations.

\end{document}